\newtheorem{theorem}{Theorem}[section]
\newtheorem{lemma}[theorem]{Lemma}
\newtheorem{corollary}[theorem]{Corollary}
\newtheorem*{theorem*}{Theorem}
\theoremstyle{definition}
\newtheorem{defi}[theorem]{Definition}
\newtheorem{remark}[theorem]{Remark}
\newcommand{\pp}[2]{\frac{\partial#1}{\partial#2}}
\newcommand{\we}{\wedge}
\newcommand{\F}{{\bf F}}
\newcommand{\R}{\mathbb{R}}
\newcommand{\Z}{\mathbb{Z}}
\newcommand{\T}{\mathbb{T}}
\newcommand{\diff}{{\rm d }}
\newcommand{\FF}{\mathcal F}
\begin{document}
\title{Integrable systems and  closed one forms}
\author{Robert Cardona}\address{ Robert Cardona,
Laboratory of Geometry and Dynamical Systems, Department of Mathematics, Universitat Polit\`{e}cnica de Catalunya, Barcelona  \it{e-mail: robert.cardona@estudiant.upc.edu }
 }
\author{Eva Miranda}\address{ Eva Miranda,
Laboratory of Geometry and Dynamical Systems, Departament of Mathematics, Universitat Polit\`{e}cnica de Catalunya BGSMath Barcelona Graduate School of
Mathematics in Barcelona and CEREMADE (Universit\'{e} de Paris Dauphine), IMCCE (Observatoire de Paris) and IMJ (Universit\'{e} de Paris Diderot), Postal Address: Observatoire de Paris 77 Avenue Denfert Rochereau, Paris, France  \it{e-mail: eva.miranda@upc.edu, Eva.Miranda@obspm.fr}
 }\thanks{{ {Robert} Cardona is supported by a Beca de Colaboraci\'{o}n MEC. {Eva} Miranda  is supported by the Catalan Institution for Research and Advanced Studies via an ICREA Academia Prize 2016, a Chaire d'Excellence de la Fondation Sciences Math\'{e}matiques de Paris and partially supported  by the grants reference number MTM2015-69135-P (MINECO/FEDER) and reference number {2017SGR932} (AGAUR).This work is supported by a public grant overseen by the French National Research Agency (ANR) as part of the \emph{\lq\lq Investissements d'Avenir"} program (reference: ANR-10-LABX-0098).}}

\begin{abstract}In the first part of this paper we  revisit a classical topological theorem by Tischler \cite{TS} and deduce a topological result about compact manifolds admitting a set of independent closed forms proving that the manifold is a fibration over a torus. As an application we reprove the Liouville theorem for integrable systems asserting that the invariant sets or compact connected fibers of a regular integrable system is a torus. We give a new proof of this theorem (including the non-commutative version) for symplectic and more generally Poisson manifolds.
\end{abstract}

\maketitle

\section{Introduction}

A Liouville integrable system on a symplectic manifold is given by $n$ functions $f_i$ (constants of motion) which pairwise  commute $\{f_i, f_j\}=0$  with respect to the Poisson bracket defined as  $ \{f, g\}=\omega(X_f, X_g)$ where $X_f$ and $X_g$ denote the Hamiltonian vector fields associated to the smooth  functions $f$ and $g$. The set of first integrals $F=(f_1,\dots, f_n)$ is often referred to as \emph{moment map}. This notion is related to classical integration of the equations of motion and can be generalized to other geometrical settings such as that of Poisson manifolds but also to non-geometrical ones like non-Hamiltonian integrable systems.

Liouville-Mineur-Arnold  theorem on integrable systems asserts that a neighborhood of a compact invariant subset (Liouville torus) of an integrable system on a symplectic manifold $(M^{2n}, \omega)$ is fibred by other Liouville tori. Furthermore,  the symplectic form can be described as the Liouville symplectic structure on $T^*(\mathbb T^n)$ in adapted coordinates to the  fibration which can be described using the cotangent lift of translations of the base torus. In particular in adapted coordinates (action-angle) the moment map is indeed a moment map of a Hamiltonian toric action. This theorem admits generalizations to the Poisson setting \cite{LMV, kms}.

The fact that the fibers of the moment map are tori is a key point in the theory and it is a purely topological result.  This topological result often attributed to Liouville \cite{liouville} was indeed probably first observed by Einstein \cite{einstein}\footnote{We thank Alain Albouy for pointing this out.}. Probably, the best well-known proof of this fact  (see for instance \cite{duistermaat}) uses the existence of a toric action associated to the joint flow of the distribution of the Hamiltonian vector fields of the first integrals and the identification of the Liouville tori as orbits of this action. The classical proof is rich because it describes not only the Liouville torus but closeby tori but somehow diverts from the topological nature of Liouville fibers.

   Given an integrable system the set of $n$ $1$-forms associated to the first integrals $df_i$ defines a set of closed $1$-forms. These closed $1$-forms are constant on the fibers of the associated \emph{moment map}.
 In this paper we pay attention to the following fact, the regular fibers of an integrable system are naturally endowed with $n$ independent closed $1$-forms defined using symplectic duality from the the constants of motion.
 This fact together with a generalization of a result of Tischler for compact manifolds admitting $k$ closed forms yields a new proof of the classical Liouville theorem.
This new topological proof of Liouville theorem works in the Poisson and non-commutative setting as well.
 In this paper, we concentrate on topological aspects of foliations associated to a set of $k$-closed one forms in the framework of integrable systems. We suspect that a detailed dual proof of the action-angle theorem by Liouville-Mineur-Arnold theorem may be obtained as a consequence of our dual viewpoint\footnote{We thank Alain Chenciner for inspiring discussions in this direction.}. We plan to apply these techniques to generalization of cosymplectic manifolds \cite{EM} in the future.

\textbf{Organization of this paper:} In Section 1 we revisit a theorem by Tischler concerning manifolds endowed with non-vanishing closed one-forms and prove that a manifold admitting $k$ independent one-forms fibers over a torus. We apply these results in Section 2 to reprove Liouville theorem for integrable systems in the symplectic and Poisson settings for commutative and non-commutative integrable systems.

\textbf{Aknowledgements:} This work was initiated  as part of the undergraduate memoir of Bachelor in Maths of { Robert Cardona} under the supervision of { Eva Miranda} and during a research stay of { Robert Cardona} under a \emph{ Severo Ochoa: Introduction to research 2017 program} at ICMAT mentored by Daniel Peralta-Salas.  We are indebted to Daniel Peralta Salas for bringing up the connection to Tischler's theorem and extremely thankful to Alain Chenciner for his encouragement and excitement about this proof. Many thanks to Camille Laurent-Gengoux for discussions about this paper. The research in this paper is supported by the Spanish Ministry of Education under a Beca de Colaboraci\'on en Departamentos. We are particularly thankful to the \emph{Fondation Sciences Math\'ematiques de Paris} for financing the trip of Robert Cardona  to Paris during which part of this paper was written. { We are grateful to the referee for their remarks and corrections.}

\section{A topological result for manifolds with closed forms}

In this first section we prove a generalization of a result of Tichler on manifolds with closed forms. We start recalling the following theorem by Tichler:

\begin{theorem}[Tischler theorem]

Let $M^n$ be a compact manifold admitting a nowhere vanishing closed $1$-form  $\omega$, then $M^n$ is a fibration over $S^1$.

\end{theorem}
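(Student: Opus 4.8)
The plan is to reduce to the case where the form has integral periods, integrate it to a submersion onto $S^1$, and then invoke Ehresmann's fibration theorem. Since $M$ is compact, $H^1_{\mathrm{dR}}(M)$ is finite dimensional; fix closed $1$-forms $\alpha_1,\dots,\alpha_b$ whose de Rham classes form a basis and write $[\omega]=\sum_i c_i[\alpha_i]$ with $c_i\in\bbR$. For rational numbers $q_i$ close to $c_i$, the closed $1$-form $\omega':=\omega+\sum_i(q_i-c_i)\alpha_i$ is cohomologous to $\sum_i q_i[\alpha_i]$, so all its periods over integral $1$-cycles are rational, and it is $C^\infty$-close to $\omega$ because the $\alpha_i$ are fixed. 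The condition of being nowhere vanishing is open in the $C^0$ topology and $M$ is compact, so for $q_i$ sufficiently close to $c_i$ the form $\omega'$ is still nowhere vanishing. Multiplying by a common denominator $N\in\bbN$ of the periods, we may assume that $\omega'$ is a nowhere vanishing closed $1$-form whose period homomorphism $\rho\colon\pi_1(M)\to\bbR$, $\rho([\gamma])=\int_\gamma\omega'$, takes values in $\bbZ$.

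Next I would integrate $\omega'$. Passing to the universal cover $p\colon\widetilde M\to M$, the form $p^*\omega'$ is exact, say $p^*\omega'=d\widetilde f$ for a smooth $\widetilde f\colon\widetilde M\to\bbR$; each deck transformation $\gamma$ shifts $\widetilde f$ by $\rho(\gamma)\in\bbZ$, so $\widetilde f$ descends to a smooth map $f\colon M\to\bbR/\bbZ\cong S^1$ with $f^*(d\theta)=\omega'$, where $d\theta$ denotes the standard angular form on $S^1$. Because $\omega'$ is nowhere vanishing, $df$ is everywhere nonzero and hence $f$ is a submersion.

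Finally, $M$ is compact, so $f$ is proper, and Ehresmann's theorem applies: a proper submersion is a locally trivial fiber bundle. Hence $M^n$ is the total space of a fiber bundle over $S^1$, as claimed.

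The step I expect to require the most care is the perturbation argument, where one must simultaneously arrange rationality of all periods and preserve the nowhere-vanishing condition; this is exactly the point at which finite dimensionality of $H^1_{\mathrm{dR}}(M)$ and compactness of $M$ are used. The passage from a submersion to a fibration is then a standard application of Ehresmann's theorem and can be cited as a black box.
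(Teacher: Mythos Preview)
The paper does not actually give its own proof of this statement; Tischler's theorem is only stated, with a reference to \cite{TS}, and then generalized in Theorem~\ref{thm:main1}. Your argument is the classical Tischler proof, and it is also exactly the $k=1$ instance of the method the paper uses to prove Theorem~\ref{thm:main1}: perturb the cohomology class to rational coefficients, clear denominators to get integral periods, integrate to a circle-valued map, and apply Ehresmann's lemma (Lemma~\ref{lem:ehresmann}).

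One point to tighten: you fix closed forms $\alpha_1,\dots,\alpha_b$ whose classes form a basis of $H^1_{\mathrm{dR}}(M)$ and then assert that $\omega'$, with class $\sum_i q_i[\alpha_i]$ and $q_i\in\bbQ$, has rational periods over integral $1$-cycles. That conclusion does not follow unless the $\alpha_i$ themselves have integral (or rational) periods; an arbitrary real basis will not do. The paper handles this by taking $\nu_j=g_j^*(d\theta)$ for maps $g_j\colon M\to S^1$, which automatically have integer periods, and you should make the analogous choice (equivalently, take representatives of an integral basis of $H^1(M;\bbZ)$ modulo torsion). With that adjustment your argument is correct and complete.
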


In this first section we prove the following generalization of Tichler's theorem which is stated without proof for foliations without holonomy in his paper \cite{TS}:
\begin{theorem}\label{thm:main1} Let ${M^n}$ be a compact connected  manifold admitting $k$  closed $1$-forms $\beta_i, i=1,\dots, k$ 
which are linearly independent at every point of the manifold, 
 then
	 ${M^n}$ fibers over a torus $\mathbb T^k$.
\end{theorem}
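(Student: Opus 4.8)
The plan is to follow Tischler's original argument, with the single nowhere-vanishing closed form replaced by the $k$-tuple $(\beta_1,\dots,\beta_k)$ and the circle replaced by $\mathbb{T}^k$. The engine of the proof is a simultaneous approximation of the $\beta_i$ by closed $1$-forms with integral periods which keeps the $k$-tuple linearly independent at every point; this persistence of pointwise independence under a small perturbation is where compactness of $M^n$ is essential, exactly as in the case $k=1$.

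Concretely, first fix closed $1$-forms $\gamma_1,\dots,\gamma_b$ whose de Rham classes form a basis of the full lattice $\mathrm{Im}\big(H^1(M^n;\mathbb{Z})\to H^1(M^n;\mathbb{R})\big)$, and write $\beta_i=\sum_j a_{ij}\gamma_j+dh_i$ with $a_{ij}\in\mathbb{R}$ and $h_i\in C^\infty(M^n)$. Replacing the $a_{ij}$ by rationals $a_{ij}'$ close to them and setting $\beta_i'=\sum_j a_{ij}'\gamma_j+dh_i$ produces closed $1$-forms with all periods rational and with $\beta_i'-\beta_i=\sum_j(a_{ij}'-a_{ij})\gamma_j$ arbitrarily $C^\infty$-small on the compact manifold $M^n$. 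The subset of $\big(\Omega^1(M^n)\big)^k$ consisting of $k$-tuples that are linearly independent at every point is $C^0$-open when $M^n$ is compact: if not, a sequence of $k$-tuples converging to $(\beta_1,\dots,\beta_k)$ and failing independence at points $x_m$ would, along a convergent subsequence $x_m\to x$, force dependence at $x$, a contradiction. Hence for a fine enough approximation $(\beta_1',\dots,\beta_k')$ is still pointwise linearly independent; and clearing denominators (replacing $\beta_i'$ by $N_i\beta_i'$ for a suitable $N_i\in\mathbb{N}$, which changes neither closedness nor pointwise independence) we may assume each $\beta_i'$ has integer periods.

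Each such $\beta_i'$ then integrates to a smooth map $f_i:M^n\to\mathbb{R}/\mathbb{Z}$ given by $f_i(p)=\int_{p_0}^p\beta_i'\bmod\mathbb{Z}$ (well defined precisely by the integrality of the periods and a choice of basepoint $p_0$), with $df_i=\beta_i'$. Assembling them gives $F:=(f_1,\dots,f_k):M^n\to\mathbb{T}^k$ whose differential $dF=(\beta_1',\dots,\beta_k')$ has rank $k$ at every point, so $F$ is a submersion; since $M^n$ is compact $F$ is proper, and its image is open, closed and nonempty in the connected manifold $\mathbb{T}^k$, hence equal to it. By Ehresmann's fibration theorem $F$ is a locally trivial fibration $M^n\to\mathbb{T}^k$, which is the assertion. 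I expect the only genuine subtlety to be the uniform-over-$M^n$ control needed to guarantee that pointwise independence survives the rational perturbation — i.e. the openness step above — which is exactly the role compactness already plays in Tischler's theorem.
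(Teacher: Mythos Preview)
Your proposal is correct and follows essentially the same approach as the paper: both extend Tischler's argument by expressing the $\beta_i$ relative to an integral basis of $H^1$, perturbing the real coefficients to rationals while using compactness to preserve pointwise independence, clearing denominators to obtain integer periods, integrating to a map $M^n\to\mathbb{T}^k$, and concluding with Ehresmann's lemma. Your write-up is in fact somewhat more explicit than the paper's on two points---the $C^0$-openness argument for persistence of pointwise independence, and the surjectivity of the resulting submersion---while the paper additionally records (though does not really need) the preliminary observation that the classes $[\beta_i]$ are pairwise distinct.
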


 We will need the following lemma:

\begin{lemma}[Ehresmann lemma \cite{EH}]\label{lem:ehresmann}
A smooth mapping $f:M^m\longrightarrow N^n$ between smooth manifolds $M^m$ and $N^n$ such that:
\begin{enumerate}
	\item $f$ is a surjective submersion, and
	\item $f$ is a proper map
\end{enumerate}
is a locally trivial fibration.
\end{lemma}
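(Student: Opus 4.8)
The plan is to prove Ehresmann's lemma by the classical argument of lifting coordinate vector fields of the base and integrating their flows to build an explicit local trivialization over charts. Fix $q\in N^n$ and a chart $\psi\colon V\to\mathbb{R}^n$ with $\psi(q)=0$, shrunk so that a closed cube $Q=[-\epsilon,\epsilon]^n$ lies in $\psi(V)$; put $U=\psi^{-1}\big((-\epsilon,\epsilon)^n\big)$ and $K=\psi^{-1}(Q)$. The goal is to produce a diffeomorphism $f^{-1}(U)\cong f^{-1}(q)\times U$ commuting with the projections to $U$, since trivializing $f$ over such a neighbourhood of every point of $N$ is precisely the definition of a locally trivial fibration (surjectivity of $f$ guaranteeing that the fibres are nonempty).

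First I would lift vector fields. Let $e_1,\dots,e_n$ be the vector fields on $V$ corresponding under $\psi$ to $\partial/\partial y^1,\dots,\partial/\partial y^n$. Because $f$ is a submersion, around each point of $f^{-1}(V)$ there are coordinates in which $f$ is a linear projection, so each $e_j$ admits local lifts, i.e.\ local vector fields $X$ with $df(X)=e_j\circ f$; patching these by a partition of unity on $f^{-1}(V)$ yields global vector fields $X_1,\dots,X_n$ on $f^{-1}(V)$ that are $f$-related to $e_1,\dots,e_n$. Hence, writing $\Phi_j^t$ for the (a priori only local) flow of $X_j$, the map $f$ intertwines $\Phi_j^t$ with the flow of $e_j$, which in the chart $\psi$ is translation by $t$ in the $j$-th coordinate.

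The crux of the proof — and the only place where properness is genuinely used — is showing that these flows are defined long enough; this is the step I expect to be the main obstacle. I would argue via the escape lemma: a maximal integral curve of $X_j$ that is not complete must eventually leave every compact subset of $M$. If $p\in f^{-1}(U)$ has $\psi(f(p))=y_0\in(-\epsilon,\epsilon)^n$, then as long as $y_0^j+t\in[-\epsilon,\epsilon]$ the trajectory $\Phi_j^t(p)$ stays inside $f^{-1}(K)$ (its image under $f$ then has all chart coordinates in $[-\epsilon,\epsilon]$), and $f^{-1}(K)$ is compact because $f$ is proper; so by the escape lemma the trajectory cannot blow up while $y_0^j+t\in(-\epsilon,\epsilon)$, i.e.\ $\Phi_j^t(p)$ is defined for every $t$ with $y_0^j+t\in(-\epsilon,\epsilon)$. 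In particular one may flow any point of $f^{-1}(U)$ along each $X_j$ until the $j$-th base coordinate reaches $0$, and flow any point of $f^{-1}(q)$ along $X_j$ out to any prescribed value in $(-\epsilon,\epsilon)$.

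Finally I would assemble the trivialization. For $p\in f^{-1}(U)$ with $y=\psi(f(p))$ set $\sigma(p)=\Phi_n^{-y^n}\circ\cdots\circ\Phi_1^{-y^1}(p)$, which lands in $f^{-1}(q)$ by $f$-relatedness, and define $h(p)=(\sigma(p),y)\in f^{-1}(q)\times(-\epsilon,\epsilon)^n$. Here there is a secondary subtlety: the lifts $X_j$ need not commute, so one cannot reorder the flows freely. The remedy is to take as candidate inverse $g(x,y)=\Phi_1^{y^1}\circ\cdots\circ\Phi_n^{y^n}(x)$, flowing out in the \emph{reverse} order; then in $h\circ g$ and $g\circ h$ the innermost pair $\Phi_j^{-y^j}\circ\Phi_j^{y^j}$ cancels and the compositions telescope to the identity, so $h$ is a bijection, smooth together with its inverse by smooth dependence of flows on initial data and on the parameter $y$. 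Since $h$ is fibre-preserving over $U$, this trivializes $f$ near every point of $N$, which is the assertion; the only remaining work is the routine smoothness bookkeeping, the genuine content being the properness/escape-lemma completeness argument above.
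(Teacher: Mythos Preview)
Your proof is correct and is precisely the classical argument for Ehresmann's fibration lemma: lift the coordinate vector fields of a chart through the submersion via a partition of unity, use properness together with the escape lemma to guarantee that the lifted flows exist for the required times, and compose the flows (with the inverse taken in reversed order to sidestep non-commutativity) to build a fibre-preserving diffeomorphism onto $f^{-1}(q)\times U$. The bookkeeping you flag---intermediate points remaining over the cube, smooth dependence of flows on parameters---is routine and your handling of it is accurate.

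However, the paper itself does \emph{not} prove this lemma at all: it is stated as a quoted result with a reference to Ehresmann's original paper \cite{EH} and is then invoked as a black box in the proof of Theorem~\ref{thm:main1}. So there is no ``paper's own proof'' to compare against; you have supplied a complete standard proof where the authors only cite one.
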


\begin{proof}(of Theorem \ref{thm:main1})We start by proving that the cohomology classes in $H^1(M^n,\mathbb{R})$, $\{[\beta_i]\}_{i=1}^k$ are all different. Assume the opposite $\beta_i$ and $\beta_j$ with $i\neq j$ such that $[\beta_i]=[\beta_j]$. Then there exists $f \in C^{\infty}(M^n)$ such that
{\begin{equation}\label{eqn:beta}\beta_i = \beta_j + df. \end{equation}
Since the $1$-forms $\beta_i$ are linearly independent the $k$-form $\beta_1\wedge...\wedge \beta_k$ is nowhere vanishing.  Using equation \ref{eqn:beta}  we obtain
\begin{equation}
 \beta_i \wedge \beta_j = \beta_i \wedge (\beta_i +df)= \beta_i \wedge df.
\end{equation}
But note that due to Weierstrass theorem  $f$ has a maximum and a minimum on a compact manifold, thus  $\beta_i \wedge \beta_j$ vanishes at these points (where $df=0$).} This contradicts the fact that $\beta_1 \wedge ... \wedge \beta_k$ is nowhere vanishing.

\vspace{2mm}

{Denote $p$ the first Betti number of $M^n$ and $\theta$ the usual angular coordinate in $S^1$. It is well known that there exist $p$ maps $g_j:M^n\rightarrow S^1$ such that the set of $1$-forms $g_j^*(d\theta)$ define a set of cohomology classes $[g_j^*(d\theta)]$ which is a basis of $H^1_{DR}(M^n,\mathbb{R})$.} With this basis, we can express $\beta_i$  as:
$$ \beta_i = \sum_{j=1}^{p} {a_{ij} \nu_j } + dF_i, {\text{ for }}i=1,...,k.$$
Using the argument on Tischler theorem proof \cite{TS}, we can choose appropriate $q_{ij} \in \mathbb{Q} \enspace \forall i,j,$ such that $\tilde{\beta_i}= \sum_{j=1}^p {q_{ij} \nu_j } + dF_i $  {are}  still non-singular and independent. Taking suitable $N_i \in \mathbb{Z}$ we obtain forms $\beta_i'= N_i\tilde{\beta_i}$ such that
$$ \beta_i' = \sum_{j=1}^p { k_{ij} \nu_j } + dH_i, $$
where $k_{ij}=N_iq_{ij} \in \mathbb{Z}$ and $H_i=N_iF_i \in C^{\infty}({M})$. Of course, they are also non singular and independent.

Without loss of generality we can assume $dH_i=0$. Indeed, the image $H_i \in C^{\infty}(M^n)$ is contained in a closed interval because $M^n$ is compact. Functions $H_i$ quotients to $S^1$ with a projection $\pi$, and we can redefine $g_i:=g_i+\pi\circ H_i$ for $i=1,...,k$.

Recall that the basis $\nu_j$ is defined as $\nu_j= g_j^*(d\theta)=d(\tilde{g_j})$, with $\tilde{g_j}= \theta\circ g_j$.
Hence the forms $\beta_i'$  {can be} written
$$ \beta_i'= d(\sum_{j=1}^p {p_{ij} \tilde{g_j}} ).$$
If we define the functions $\theta_i= \sum_{j=1}^p {p_{ij} \tilde{g_j}}$, { then the induced mappings on the quotient $\tilde{\theta_i}:M^n \longrightarrow S^1$} are $k$ submersions of $M^n$ to $S^1$. Consider
\begin{align*}
 \Theta : M^n &\longrightarrow S^1 \times...\times S^1= \mathbb{T}^k \\
 		p &\longmapsto (\tilde{\theta_1}(p),...,\tilde{\theta_k}(p)).
\end{align*}
  Since {the forms} $\beta_i'$ are independent in $H^1(M^n,\mathbb{R})$ this implies  $d\theta_i$ are independent seen as one-forms from $M^n$ to ${\mathbb{R}^k}$ and so $d\tilde{\theta_i}$ are also independent into ${\mathbb{T}^k}$, this implies that  $\Theta$ is a surjective submersion. Since $M^n$ is compact, we can apply Ehresmann lemma {(Lemma \ref{lem:ehresmann})} and $\Theta$ defines a locally trivial fibration.
\end{proof}
When $k=n$ we obtain the following as a corollary:

\begin{corollary}\label{thm:main} Let $M^n$ be a compact connected  manifold admitting $n$ closed $1$-forms $\beta_i, i=1,\dots, n$ which are 
which are linearly independent at every point of the manifold, then 
	 $M^n$ is diffeomorphic to a torus $\mathbb T^n$.
\end{corollary}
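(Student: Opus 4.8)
The plan is to derive the corollary directly from Theorem~\ref{thm:main1} applied with $k=n$. That theorem already furnishes a locally trivial fibration $\Theta: M^n \longrightarrow \mathbb{T}^n$, and since $\dim M^n = n = \dim \mathbb{T}^n$, the fibers of $\Theta$ are $0$-dimensional; being fibers of a proper submersion they are discrete and compact, hence finite. Thus $\Theta$ is a finite covering map of $\mathbb{T}^n$, and it remains only to upgrade a finite cover of the torus to a diffeomorphism, using the hypothesis that $M^n$ is connected.

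First I would recall the construction from the proof of Theorem~\ref{thm:main1}: the $1$-forms $\beta_i' = d\theta_i$ are exact after passing to the universal-type construction, with $\tilde\theta_i : M^n \to S^1$ the induced maps, and $\Theta = (\tilde\theta_1, \dots, \tilde\theta_n)$ is a surjective submersion between compact manifolds, hence a locally trivial fibration by Ehresmann's lemma (Lemma~\ref{lem:ehresmann}). Because both manifolds have dimension $n$, the typical fiber $F$ is a compact $0$-manifold, i.e.\ a finite set of, say, $m$ points; so $\Theta$ is an $m$-sheeted covering. Next I would invoke the classification of covering spaces: connected covers of $\mathbb{T}^n$ are classified by subgroups of $\pi_1(\mathbb{T}^n) = \mathbb{Z}^n$, and every finite-index subgroup $\Lambda \subset \mathbb{Z}^n$ is itself isomorphic to $\mathbb{Z}^n$ with the corresponding cover $\mathbb{R}^n/\Lambda$ again diffeomorphic to $\mathbb{T}^n$. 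Hence $M^n$, being the connected total space of a finite cover of $\mathbb{T}^n$, is diffeomorphic to $\mathbb{T}^n$.

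Alternatively, and perhaps more in the spirit of this paper, one can avoid covering-space theory: lift the map $\Theta$ to the universal cover, obtaining a map $\widehat\Theta : \widetilde{M^n} \to \mathbb{R}^n$ which is a submersion and, being the composition of covering maps with a local diffeomorphism, is a diffeomorphism onto $\mathbb{R}^n$; then $M^n = \widetilde{M^n}/\pi_1(M^n)$ is a quotient of $\mathbb{R}^n$ by a group of deck transformations acting by translations (the translations being forced by the fact that $\widehat\Theta$ intertwines the actions with translations of $\mathbb{R}^n$), and a free, properly discontinuous, cocompact group of translations of $\mathbb{R}^n$ is a lattice, so the quotient is diffeomorphic to $\mathbb{T}^n$.

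The main obstacle is a conceptual rather than a computational one: Theorem~\ref{thm:main1} by itself only gives a fibration over $\mathbb{T}^n$, and one must argue that in the top-dimensional case this fibration is actually a \emph{diffeomorphism} rather than merely a covering. The key points to get right are (i) that the fiber is finite — which follows from compactness plus the submersion being between equidimensional manifolds — and (ii) that a connected finite cover of $\mathbb{T}^n$ is again $\mathbb{T}^n$, which is where the connectedness hypothesis on $M^n$ is essential (without it $M^n$ could be a disjoint union of tori). Everything else is a direct application of the already-established Theorem~\ref{thm:main1} and standard facts about the torus.
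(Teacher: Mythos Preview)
Your approach is essentially the same as the paper's: apply Theorem~\ref{thm:main1} with $k=n$, observe that equal dimensions force $\Theta$ to be a covering map, and then use connectedness of $M^n$ to conclude. Your treatment of the final step is in fact more careful than the paper's: the paper asserts that connectedness makes $\Theta$ itself a diffeomorphism, whereas you correctly argue only that any connected finite cover of $\mathbb{T}^n$ is again diffeomorphic to $\mathbb{T}^n$ (via the classification of subgroups of $\mathbb{Z}^n$), which is all that is needed and all that is actually true.
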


\begin{proof} Applying Theorem 2.3, $M^n$ fibers over a torus $\mathbb T^n$. From the invariance of domain theorem it is an immersion  because the target space  is $n$-dimensional too. Thus $\Theta$ defines a covering map but since $M^n$ is connected it defines a  diffeomorphism
 \[  M^n \cong \mathbb T^n. \]	
 \end{proof}
\section{Applications to regular integrable systems}

One of the best well-known theorem of integrable systems is \emph{Liouville-Mineur-Arnold theorem} which roughly speaking asserts that the fibers of the map $F$ defined by the first integrals describe a fibration by tori (if the ambient manifold is compact and the fibers are regular) and also that there exists {privileged} coordinates (called action-angle coordinates) in which the symplectic form can be expressed in a unique Darboux chart in a neighborhood of one of these tori.

The first statement of Liouville-Mineur-Arnold theorem says that a compact connected regular set of an integrable system is in fact a torus of dimension $n$. This theorem has been attributed to Liouville for a long time but it was indeed probably first proved by Einstein \cite{einstein}. The theorem remains valid when we consider an integrable system on a Poisson manifold but also for the so-called non-commutative systems.

In this section we apply {Corollary} \ref{thm:main} to reprove that the fibers are tori for integrable systems on symplectic and Poisson manifolds. The tools used for this new proof differ from the classical tools where a torus action is used \cite{duistermaat, LMV}.

\subsection{Liouville tori of integrable systems on symplectic manifolds}

Recall the definition of an integrable system as well as the Liouville-Mineur-Arnold theorem.

\begin{defi} An \textbf{integrable system} on a symplectic manifold $(M^{2n},\omega)$ is a set of $n$ functions $f_1,...,f_n$ generically functionally independent (i.e. $df_1\wedge...\wedge df_n \neq 0$ on a dense set) and pairwise commuting with respect to the Poisson bracket $\{f_i, f_j\}=\omega(X_{f_i},X_{f_j})=0, \forall i,j$.
\end{defi}
A point $p$ is called regular point for the integrable system if $df_1\wedge ... \wedge df_n (p)\neq 0$.
\begin{theorem}
  Let $(M^{2n},\omega)$ be a symplectic manifold and $F=(f_1,...,f_n)$ an integrable system. Let $p$ be a regular point denote $F(p)=c$ and assume  $L_c=F^{-1}(c)$ is compact and connected, then
  \begin{enumerate}
  \item {$L_c \cong \mathbb{T}^n$},
  \item A neighborhood $U$ of the torus $L_c$ is the direct product of {$\mathbb{T}^n$} and the disc $D^n$, and the fibration given by ${F}$ coincides with the projection on the disc.
  \item in a neighborhood of $L_c$, $U(L_c)$, there exist coordinates  $(\theta_1,...,\theta_n,p_1,...,p_n)$ such that $\omega$ is written $\omega= \sum_{i=1}^{n}{dp_i\wedge d\theta_i}$ and $F$ only depends of $p_1,...,p_n$.
  \end{enumerate}
  \end{theorem}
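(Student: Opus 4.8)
The plan is to read off parts (1) and (2) directly from the topological results of Section~2, and to recover the action--angle normal form (3) by the classical period-lattice argument, now with the topological input of part~(1) already in hand.

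For part~(1), I work in the regular setting, so that $L_c=F^{-1}(c)$ is a compact connected $n$-dimensional submanifold on which $F$ restricts to a submersion. I would use the Hamiltonian vector fields $X_{f_1},\dots,X_{f_n}$: they are tangent to $L_c$ since $X_{f_i}(f_j)=\{f_j,f_i\}=0$, they are pointwise linearly independent along $L_c$ since $\omega$ is nondegenerate and $df_1\wedge\cdots\wedge df_n\neq 0$ there, and they pairwise commute since $[X_{f_i},X_{f_j}]=-X_{\{f_i,f_j\}}=0$. Hence $\{X_{f_i}|_{L_c}\}$ is a global frame of $TL_c$ made of commuting vector fields, and I would take the dual coframe $\beta_1,\dots,\beta_n$ defined by $\beta_i(X_{f_j})=\delta_{ij}$. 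Evaluating on the frame, $d\beta_i(X_{f_j},X_{f_k})=X_{f_j}(\delta_{ik})-X_{f_k}(\delta_{ij})-\beta_i([X_{f_j},X_{f_k}])=0$, so each $\beta_i$ is closed. Thus $L_c$ is a compact connected manifold carrying $n$ everywhere-independent closed $1$-forms, and Corollary~\ref{thm:main} gives $L_c\cong\mathbb{T}^n$.

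For part~(2), I would invoke Ehresmann's lemma (Lemma~\ref{lem:ehresmann}). Choose a relatively compact open neighborhood $W$ of $L_c$ consisting of regular points; then $F(\partial W)$ is compact and misses $c$, so some open ball $D^n\ni c$ is disjoint from it. On $U:=W\cap F^{-1}(D^n)$ the restriction of $F$ is a submersion onto $D^n$ which is proper (preimages of compact sets are closed in $\overline W$, hence compact) and, being open and closed into the connected $D^n$, surjective. By Ehresmann's lemma $F|_U\colon U\to D^n$ is a locally trivial fibration with fiber $\mathbb{T}^n$, and since $D^n$ is contractible it is trivial; hence $U\cong D^n\times\mathbb{T}^n$ with $F$ the projection onto $D^n$, which is part~(2).

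For part~(3) I expect the bulk of the work, and I would follow the classical construction (see \cite{duistermaat}), now using part~(1) to know the fibers are tori. The joint flow of $X_{f_1},\dots,X_{f_n}$ is a fiber-preserving $\mathbb{R}^n$-action on $U$, transitive on each $L_c\cong\mathbb{T}^n$; its stabilizer is the period lattice $\Lambda(c)\subset\mathbb{R}^n$, which varies smoothly (implicit function theorem applied to the flow) and, after shrinking $D^n$, admits a smooth basis. Normalizing the generators by this basis produces a free $\mathbb{T}^n$-action with generators $Y_1,\dots,Y_n$, and a section of $F$ gives angle coordinates $\theta_1,\dots,\theta_n$. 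The delicate point, which I see as the main obstacle, is the existence of action variables $p_1,\dots,p_n$ depending only on $c$ with $\omega=\sum_i dp_i\wedge d\theta_i$: the closed $1$-forms $\iota_{Y_i}\omega$ on $U$ must be shown exact with fiberwise-constant primitives, which I would obtain as $p_i(c)=\frac{1}{2\pi}\oint_{\gamma_i(c)}\lambda$ for $\lambda$ a local primitive of $\omega$ near a fiber and $\gamma_i(c)$ the orbit circle of $Y_i$, while the leftover base-direction component of $\omega$ is removed by a Moser-type correction of the section. Since $F$ and $(p_1,\dots,p_n)$ have the same fibers, $F$ depends only on $p$, which completes part~(3).
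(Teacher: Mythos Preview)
Your argument for part~(1) is essentially the paper's: both produce the dual coframe $\beta_i$ to the commuting Hamiltonian frame $X_{f_i}|_{L_c}$, verify closedness by the identical computation $d\beta_i(X_j,X_k)=0$, and then invoke Corollary~\ref{thm:main}. The only cosmetic difference is that the paper first builds ambient one-forms $\alpha_i=\iota_{S_i}\omega$ from transverse vectors $S_i$ with $S_i(f_j)=\delta_{ij}$ and pulls them back to $L_c$, whereas you define $\beta_i$ intrinsically as the dual coframe; the paper itself observes that its $\beta_i$ satisfy $\beta_i(X_j)=\delta_{ij}$ and are therefore the dual basis $X_i^*$, so the two constructions coincide. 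For parts~(2) and~(3) there is nothing to compare: the paper states the full Liouville--Mineur--Arnold theorem for context but only reproves item~(1), leaving the product neighborhood and the action--angle normal form to the classical literature. Your Ehresmann argument for~(2) and period-lattice sketch for~(3) are the standard route and simply go beyond what the paper supplies.
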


Denote by $L^n$ any connected component of $F^{-1}(c)$ (or all of it if assumed connected) and we also assume it is compact. Denote by $X_i$ the Hamiltonian vector associated to $f_i$. Observe that
 \begin{align*}
  	0& = \{ f_i, f_j \}  \\
  	& = \omega( X_i, X_j) \\
  					&= \iota_{X_i}\omega (X_j) \\
  					&= -df_i(X_j)= -X_j(f_i) \enspace \forall i,j=1,...,n. 				
 \end{align*}
and  the vector fields $X_1,...,X_n$ are tangent to $L^n$ for all $p\in L^n$. So we can indeed write   $T(L^n)_p=\langle X_{f_1},...,X_{f_n} \rangle_p $. Take now in $\mathbb{R}^n$ the canonical basis of vector fields $ \{ \partial_i=\frac{\partial}{\partial x_i} \}_{i=1}^n$ on $\mathbb R^n$ and consider their pullbacks by $F$, $S_i:=F^*(\partial_i)$, which are vector fields {on} $M^{2n}$ { which are transverse} to $L^n$. They satisfy:
 $$ S_i(f_j)=\delta_{ij}. $$  They are determined by this condition modulo $T_pL^n$.
\begin{lemma}
Let $j:L^n \longrightarrow M^{2n}$ be the inclusion of the regular level set $L^n$ into {$M^{2n}$}. Define the one-forms $\alpha_i=\iota_{S_i}\omega$. Then the one-forms $\beta_i=j^*\alpha_i$ are closed.
\end{lemma}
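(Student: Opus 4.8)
The plan is to identify $\beta_i$ explicitly as the coframe on $L^n$ dual to the commuting Hamiltonian vector fields $X_{f_1},\dots,X_{f_n}$, and then read off closedness from the intrinsic (Koszul) formula for the exterior derivative.

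First I would compute the pullback on the tangent spaces of $L^n$. Since $T_pL^n=\langle X_{f_1},\dots,X_{f_n}\rangle_p$ at every point, it suffices to evaluate $\beta_i$ on this frame. For each $k$,
$$\beta_i(X_{f_k})=\alpha_i(X_{f_k})=\omega(S_i,X_{f_k})=-\iota_{X_{f_k}}\omega(S_i)=df_k(S_i)=S_i(f_k)=\delta_{ik},$$
using the sign convention $\iota_{X_{f_k}}\omega=-df_k$ and the defining property $S_i(f_j)=\delta_{ij}$. Hence $\{\beta_1,\dots,\beta_n\}$ is precisely the coframe dual to $\{X_{f_1},\dots,X_{f_n}\}$ on $L^n$; as a byproduct this shows that $\beta_i$ does not depend on the choice of the transverse lifts $S_i$, which were only determined modulo $T L^n$.

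Next I would use that the $X_{f_k}$ pairwise commute, i.e.\ $[X_{f_j},X_{f_k}]=0$, since $\{f_j,f_k\}=0$. By the intrinsic formula for the exterior derivative of a $1$-form,
$$d\beta_i(X_{f_j},X_{f_k})=X_{f_j}\big(\beta_i(X_{f_k})\big)-X_{f_k}\big(\beta_i(X_{f_j})\big)-\beta_i\big([X_{f_j},X_{f_k}]\big)=X_{f_j}(\delta_{ik})-X_{f_k}(\delta_{ij})-0=0,$$
since the Kronecker symbols are constant. As the frame $\{X_{f_1},\dots,X_{f_n}\}$ spans $T_pL^n$ at every point, a $2$-form vanishing on all pairs from this frame is identically zero, so $d\beta_i=0$.

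There is no serious obstacle here: the argument is essentially mechanical once one observes that $\beta_i$ is a coframe dual to a commuting frame. The two points requiring a little care are the sign bookkeeping in $\iota_{X_f}\omega=-df$ and the fact that $TL^n$ is spanned by the $X_{f_k}$, both of which are already in place from the preceding discussion. An alternative route would be to invoke Cartan's magic formula, $d\alpha_i=\mathcal L_{S_i}\omega-\iota_{S_i}d\omega=\mathcal L_{S_i}\omega$, and then show that $j^*(\mathcal L_{S_i}\omega)=0$ on $L^n$; but verifying this restriction directly is less transparent than the dual-frame computation above, so I would favor the latter.
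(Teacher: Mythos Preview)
Your proof is correct and follows essentially the same approach as the paper: you compute $\beta_i(X_{f_k})=\delta_{ik}$ from $S_i(f_k)=\delta_{ik}$, then apply the intrinsic (Koszul) formula for $d\beta_i$ on the commuting frame $\{X_{f_k}\}$ to conclude $d\beta_i=0$. Your additional remarks that the $\beta_i$ form the dual coframe and are independent of the choice of lifts $S_i$ are nice clarifications not made explicit in the paper at this point.
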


\begin{proof} By definition of $S_i$, we have $S_i(f_j)=\delta_{ij}$. Applying it $\forall i,j$:
\begin{align*}
\alpha_i(X_j) &= \omega(S_i,X_j) \\
			  &= -\omega(X_j,S_i) \\
			  &= - \iota_{X_j}\omega(S_i) \\
			  &= df_j(S_i)= S_i(f_j)= \delta_{ij}.
\end{align*}

To prove that $\beta_i$ is closed, we just have to check that $d\alpha_i (X_i,X_j)=0 \text{ for all } X_i, X_j  \text{ {sections of} } {TL^n}$.
\begin{align*}
d\alpha_i (X_j,X_k)&=  X_k(\alpha_i(X_j)) - X_j(\alpha_i(X_k)) - \alpha_i([X_j,{X_k}]) \\
				   &= X_k( \delta_{ij}) - X_j (\delta_{ik}) - \alpha_i(0)= 0.
\end{align*}
We conclude that $d(j^*\alpha_i)=d\beta_i=0$ and so our forms $\beta_i$ are closed in $L^n$.
\end{proof}

\begin{lemma} The $1$-forms $\beta_1,...,\beta_n$ are linearly independent at all points of $L^n$.
\end{lemma}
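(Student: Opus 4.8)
The plan is to show that the matrix expressing the $\beta_i$ in terms of any coframe at a point $p\in L^n$ has full rank, by using the pairing between the $\beta_i$ and the tangent vector fields $X_1,\dots,X_n$ that span $T_pL^n$. Concretely, the previous lemma establishes that $\beta_i(X_j) = \alpha_i(X_j) = \delta_{ij}$ on all of $L^n$ (here $X_j$ is tangent to $L^n$, so $j^*\alpha_i$ evaluated on $X_j$ equals $\alpha_i$ evaluated on $j_*X_j = X_j$). Since the regularity hypothesis $df_1\wedge\cdots\wedge df_n(p)\neq 0$ forces $X_{f_1},\dots,X_{f_n}$ to be linearly independent at $p$ (the symplectic form is nondegenerate, so $f\mapsto X_f$ is injective on differentials, and linear independence of the $df_i$ transfers to linear independence of the $X_{f_i}$), these vector fields form a basis of $T_pL^n$.

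First I would suppose, toward a contradiction, that $\sum_{i=1}^n c_i \beta_i = 0$ at $p$ for some scalars $c_i$ not all zero. Then for each $j$ I evaluate this relation on $X_j \in T_pL^n$:
\[
0 = \Big(\sum_{i=1}^n c_i \beta_i\Big)(X_j) = \sum_{i=1}^n c_i\, \beta_i(X_j) = \sum_{i=1}^n c_i\,\delta_{ij} = c_j .
\]
Hence $c_j = 0$ for all $j$, contradicting the assumption. Therefore $\beta_1,\dots,\beta_n$ are linearly independent at $p$, and since $p\in L^n$ was arbitrary, they are linearly independent at every point of $L^n$.

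I do not anticipate a serious obstacle here; the only point requiring a little care is the bookkeeping of the pullback $j^*$: one must note that $\beta_i = j^*\alpha_i$ paired with a vector $v\in T_pL^n$ equals $\alpha_i(j_*v)$, and since $j$ is an inclusion $j_*v = v$ as an element of $T_pM^{2n}$, so the identity $\alpha_i(X_j)=\delta_{ij}$ from the preceding lemma applies directly. The other subtle step is the transfer of linear independence from $\{df_i\}$ to $\{X_{f_i}\}$ at a regular point, which is immediate from nondegeneracy of $\omega$. With $n$ linearly independent covectors $\beta_i$ pairing in the ``identity-matrix'' pattern against $n$ linearly independent vectors $X_j$ spanning the $n$-dimensional space $T_pL^n$, full rank is automatic.
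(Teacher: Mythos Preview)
Your argument is correct and is essentially the same as the paper's: both rely on the identity $\beta_i(X_j)=\delta_{ij}$ from the preceding lemma to recognize the $\beta_i$ as the dual coframe to the frame $X_1,\dots,X_n$ of $T_pL^n$, hence linearly independent. The paper states this a bit more tersely (simply invoking the dual-basis definition), while you spell out the contradiction and the passage from independence of the $df_i$ to independence of the $X_{f_i}$, but the underlying idea is identical.
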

\begin{proof}
 As seen in the previous lemma, we have that $\beta_i(X_j)=\delta_{ij}$. We deduce that $\beta_i= {X_i}^*$, by definition of dual basis. Since $X_1,...,X_n$ form a  basis of the tangent space at every point in $L^n$, we have that $\beta_1,...,\beta_n$ form a basis of the cotangent space at every point in $L^n$. In particular all $\beta_i$ are independent at all points of $L^n$.
\end{proof}

We now prove Liouville theorem {using Corollary \ref{thm:main}}.
\begin{theorem}[Liouville's theorem] The compact regular fiber of an integrable system on $(M^{2n}, \omega)$ is diffeomorphic to a torus $\mathbb T^n$.
\end{theorem}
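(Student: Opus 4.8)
The plan is to apply Corollary~\ref{thm:main} directly to the fiber $L^n$ itself, since by this point all the necessary ingredients have been assembled. First I would record that, because $p$ is a regular point, the set where $df_1\wedge\dots\wedge df_n\neq 0$ is open and contains $L^n$, so $F$ restricts to a submersion on a neighborhood of $L^n$ and $L^n=F^{-1}(c)$ is an embedded submanifold of $M^{2n}$ of dimension $2n-n=n$. Together with the standing hypothesis that it is compact and connected, this makes $L^n$ a compact connected smooth $n$-manifold, exactly the setting of Corollary~\ref{thm:main}.

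Next I would invoke the two preceding lemmas. The transverse vector fields $S_i=F^*(\partial_i)$ satisfy $S_i(f_j)=\delta_{ij}$; the one-forms $\alpha_i=\iota_{S_i}\omega$ pull back under the inclusion $j:L^n\hookrightarrow M^{2n}$ to closed forms $\beta_i=j^*\alpha_i$ on $L^n$, and since $\beta_i(X_j)=\delta_{ij}$ with $X_1,\dots,X_n$ framing $TL^n$ at every point, the $\beta_i$ are linearly independent at every point of $L^n$. Hence $L^n$ is a compact connected $n$-manifold carrying $n$ closed $1$-forms that are pointwise linearly independent.

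Applying Corollary~\ref{thm:main} then yields a diffeomorphism $L^n\cong\mathbb T^n$, which is precisely the assertion. For a connected level set $F^{-1}(c)$ that is not assumed connected, the same argument applies to each compact connected component, giving that every such component is an $n$-torus.

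Because the genuine work has been front-loaded — into the topological statement Theorem~\ref{thm:main1} and its corollary on one side, and into the symplectic construction of the dual closed forms $\beta_i$ on the other — there is essentially no obstacle left in this final step. The only point that requires care, already handled in the lemma on closedness, is that each $S_i$ is determined only modulo $T_pL^n$: one must verify that the pulled-back form $\beta_i=j^*\iota_{S_i}\omega$ and its closedness are independent of this ambiguity, which follows from the identities $\beta_i(X_j)=\delta_{ij}$ and $d\alpha_i(X_j,X_k)=0$ for sections $X_j,X_k$ of $TL^n$. Everything else is a direct citation of Corollary~\ref{thm:main}.
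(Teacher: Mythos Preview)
Your proof is correct and follows essentially the same route as the paper: invoke the two preceding lemmas to obtain $n$ closed, pointwise independent $1$-forms $\beta_i$ on the compact connected $n$-manifold $L^n$, then apply Corollary~\ref{thm:main}. The paper's version is simply terser, omitting the explicit recollection of why $L^n$ is an $n$-dimensional submanifold and the remark about the ambiguity in the choice of $S_i$.
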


\begin{proof} The forms $\beta_i$ in the preceding lemma are $n$ closed $1$-forms which are  independent  at all points of $L^n$. Applying theorem \ref{thm:main}, $L^n\cong \mathbb T^n$.
\end{proof}

\subsection{ Commutative and non-commutative integrable systems on Poisson manifolds}
\hfill \newline
A Poisson manifold is a pair $(M, \Pi)$ where $\Pi$ is a bi-vector field with an associated bracket on functions
$$\{f,g\}:=\Pi(df,dg), \quad f,g:M\to \R$$
satisfying the Jacobi identity. This is equivalent to the integrability equation $[\Pi, \Pi]=0$.
The Hamiltonian vector field of a function $f$ in this context  is defined as
$X_f := \Pi(df,\cdot)$.

Poisson manifolds constitute a generalization of symplectic manifolds and it generalizes very natural structures such that of linear Poisson structures associated to the dual of a Lie algebra. Integrability of Hamiltonian systems in the Poisson setting is a rich field which is naturally connected to representation theory (Gelfand-Ceitlin systems on {$\mathfrak{u}(n)^*$)}.
We recall the notion of integrable system for Poisson manifolds.

\begin{defi}\label{def:liouville}
  Let $(M,\Pi)$ be a Poisson manifold of (maximal) rank $2r$ and of dimension $n$. An $s$-tuplet of functions
  $\F=(f_1,\dots,f_s)$ on $M$ is said to define a \emph{Liouville integrable system} on $(M,\Pi)$ if
  \begin{enumerate}
    \item $f_1,\dots,f_s$ are independent (i.e., their differentials are independent on a dense open set),
    \item $f_1,\dots,f_s$ are pairwise in involution and $r+s=n$.
  \end{enumerate}
The map $\F:M\to\R^s$ is called the \emph{moment map} of $(M,\Pi,\F)$.
\end{defi}

{ A point $m$ is called regular whenever $\diff_mf_1\we\dots\we\diff_mf_s\neq0$. Observe that complete integrability in the Poisson context also implies that the distribution generated by ${X_{f_1},\dots,X_{f_s}}$ is integrable in a neighborhood of a regular point in the sense of Frobenius because $[X_{f_i}, X_{f_j}]= X_{\{f_i, f_j\}}$.}

{ For integrable systems on Poisson manifolds it is possible to prove that the compact leaves of this distribution are tori and indeed to prove an action-angle theorem in a neighborhood of a regular torus {(see \cite{LMV})}.}

\begin{theorem}\label{thm:action-anglepoisson}
  Let $(M,\Pi)$ be a Poisson manifold of dimension $n$ of maximal rank $2r$. Suppose that $\F=(f_1,\dots,f_s)$
  is an integrable system on $(M,\Pi)$, i.e., $r+s=n$ and the components of $\F$ are independent and in
  involution. Suppose that $m\in M$ is a point such that
\begin{enumerate}
  \item[(1)] $\diff_mf_1\we\dots\we\diff_mf_s\neq0$;
  \item[(2)] The rank of $\Pi$ at $m$ is $2r$;
  \item[(3)] The integral manifold $\FF_m$ of the { distribution generated by} ${X_{f_1},\dots,X_{f_s}}$, passing through $m$, is compact.
\end{enumerate}
  Then there exists ${\R} $-valued smooth functions $(\sigma_1,\dots, \sigma_{s})$ and $ {\R}/{\Z}$-valued smooth
  functions $({\theta_1},\dots,{\theta_r})$, defined in a neighborhood $U$ of $\FF_m$ such that
  \begin{enumerate}

  \item The manifold $\FF_m$ is a torus $\T^r$.

    \item The functions $(\theta_1,\dots,\theta_r,\sigma_1,\dots,\sigma_{s})$ define an isomorphism
      $U\simeq\T^r\times B^{s}$;
    \item The Poisson structure can be written in terms of these coordinates as
      $$
        \Pi=\sum_{i=1}^r\pp{}{\theta_i}\we\pp{}{\sigma_i},
      $$
      in particular the functions $\sigma_{r+1},\dots,\sigma_{s}$ are Casimirs of $\Pi$ (restricted to $U$);
    \item The leaves of the surjective submersion $\F=(f_1,\dots,f_{s})$ are given by the projection onto the
          second component $\T^r\times B^{s}$, in particular, the functions $\sigma_1,\dots,\sigma_{s}$ depend on the
          functions $f_1,\dots,f_{s}$ only.
  \end{enumerate}

\end{theorem}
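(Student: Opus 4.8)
The plan is to mimic the symplectic argument from the previous subsection: produce on the integral manifold $\FF_m$ a family of $r$ closed $1$-forms that are pointwise linearly independent, and then invoke Corollary \ref{thm:main} to conclude $\FF_m \cong \T^r$. The remaining statements (2)--(4) are the genuinely new content and require a separate construction, but part (1) should follow along these lines.

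First I would set up the dual forms. The tangent space of $\FF_m$ at each point is spanned by $X_{f_1},\dots,X_{f_s}$; since the rank of $\Pi$ is $2r$ at $m$ (hence in a neighborhood) and $r+s=n$, the $s$ Hamiltonian vector fields span an $r$-dimensional subspace of the (everywhere $2r$-dimensional) symplectic leaf, and the kernel is spanned by the gradients of the $s-r$ Casimir-like combinations. To get a frame, I would restrict attention to a leaf $S$ of the symplectic foliation through $m$: on $S$ the Poisson structure is a genuine symplectic form $\omega_S$, the functions $f_i|_S$ are still in involution, and exactly $r$ of the $X_{f_i}|_S$ are independent and tangent to $\FF_m \subset S$. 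Choosing $r$ of them, say after relabeling $X_{f_1},\dots,X_{f_r}$, one is back in the symplectic situation: pick vector fields $S_1,\dots,S_r$ on $S$ with $\omega_S(S_i, X_{f_j}) = \delta_{ij}$ (possible by nondegeneracy of $\omega_S$ and independence of the $df_j|_S$), set $\alpha_i = \iota_{S_i}\omega_S$ and $\beta_i = \iota^*\alpha_i$ for the inclusion $\iota:\FF_m\hookrightarrow S$. The computation $\beta_i(X_{f_j}) = \delta_{ij}$ shows the $\beta_i$ are a coframe dual to the frame $X_{f_j}$, hence pointwise independent, and the Cartan-formula computation $d\alpha_i(X_{f_j},X_{f_k}) = -\alpha_i([X_{f_j},X_{f_k}])$ together with $[X_{f_j},X_{f_k}] = X_{\{f_j,f_k\}} = 0$ shows $d\beta_i = 0$ exactly as in the two lemmas above. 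Now $\FF_m$ is compact connected with $r$ independent closed $1$-forms, so Corollary \ref{thm:main} gives $\FF_m \cong \T^r$.

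The main obstacle I anticipate is not the diffeomorphism type but the coordinate normal form in parts (2)--(4): building action coordinates $\sigma_i$ and angle coordinates $\theta_i$ in a whole neighborhood $U$ of $\FF_m$ so that $\Pi = \sum_i \partial_{\theta_i}\wedge\partial_{\sigma_i}$. This is the Poisson action-angle theorem proper; the topological input (that the fiber is a torus) only gets the argument started. I would obtain the angle variables from the fibration structure: Ehresmann's lemma (Lemma \ref{lem:ehresmann}), applied to the submersion $\F$ near the compact fiber, already gives a local trivialization $U \cong \FF_m \times B^s \cong \T^r \times B^s$, and one then corrects the torus coordinates using the commuting complete flows of $X_{f_1},\dots,X_{f_r}$ (whose periods vary smoothly over the base, giving a period lattice) to get honest $\R/\Z$-valued $\theta_i$. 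The action variables come from integrating the $\beta_i$ over a basis of cycles, or equivalently from a primitive of the restriction of a symplectic potential; checking that in these coordinates $\Pi$ takes the claimed constant form, and that the last $s-r$ functions are Casimirs, is a computation of the same flavor as the classical symplectic case carried out leaf-by-leaf. I would cite \cite{LMV} for the full details of this last part, since the novelty of the present paper is the topological proof of (1), and present (2)--(4) as the known action-angle package whose first ingredient has just been re-proved.
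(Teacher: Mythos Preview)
Your proposal is correct and follows essentially the same route as the paper. The paper, like you, proves only item (1) by producing $r$ independent closed $1$-forms on $\FF_m$ and invoking Corollary \ref{thm:main}, and defers items (2)--(4) entirely to \cite{LMV}; in fact the paper carries out the argument once for the non-commutative case (Theorem \ref{thm:action-angle_intro}) and then recovers the present statement as the special case $r=s$.

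The one substantive difference is how the dual forms are built. You restrict to the symplectic leaf $S$ and use $\omega_S$ to define $\alpha_i=\iota_{S_i}\omega_S$, mimicking the symplectic construction verbatim. The paper bypasses the leaf: since $X_{f_1},\dots,X_{f_r}$ already frame $T\FF_m$, it simply picks any $1$-forms $\alpha_i$ on $M$ with $\alpha_i(X_j)=\delta_{ij}$ and observes that the pullbacks $\beta_i=j^*\alpha_i$ to $\FF_m$ are well defined (they are just the dual coframe). The closedness check
\[
d\beta_i(X_j,X_k)=-\beta_i([X_j,X_k])=-\beta_i(X_{\{f_j,f_k\}})=0
\]
is then identical to yours. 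Your detour through the symplectic leaf is harmless but unnecessary; the paper's version has the advantage of making transparent that the topological step uses only the involutivity $[X_j,X_k]=0$ and nothing about the Poisson or symplectic structure per se.
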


The functions $\theta_1,\dots,\theta_r$ are called \textbf{angle coordinates}, the functions $\sigma_1,$
$\dots,\sigma_r$ are called \textbf{action coordinates} and the remaining functions
$\sigma_{r+1},\dots,\sigma_{s}$ are called \textbf{transverse coordinates}.

We can apply Tichler's trick to reprove the orbits of a non-commutative integrable systems are tori in the Poisson setting (and deduce the result for the commutative particular case). We start recalling some definitions from \cite{LMV}.

\begin{defi}\label{def:noncomintsys}
 Let $(M,\Pi)$ be a Poisson manifold. An $s$-tuple of functions  $F=(f_1,\dots,f_s)$ on $M$ is a \textbf{non-commutative (Liouville) integrable system} of rank {$r\leq s$} on $(M,\Pi)$ if
\begin{enumerate}
  \item $f_1,\dots,f_s$ are independent (i.e. their differentials are independent on a dense open subset of $M$) and the Hamiltonian vector fields of the functions $f_1, \dots,f_r$ are linearly independent at some point of $M$.
  \item The functions $f_1,\dots,f_r$ are in involution with the functions $f_1,\dots,f_s$ and $r+s =\dim M$.

\end{enumerate}
\end{defi}
\begin{remark}
As a consequence the maximal rank of the Poisson structure is $2r$.
\end{remark}

\begin{remark} When $r=s$ and thus all the first integrals commute we obtain Liouville integrable systems as a particular case.
\end{remark}

Some notation: We denote the subset of $M$ where the differentials $d f_1,\dots,d f_s$ are independent by  $U_F$  and the subset of $M$ where the vector fields
 $X_{f_1}, \dots ,X_{f_r}$ are independent by  $M_{F,r}$.

On the open subset $M_{F,r}\cap U_F $ of~$M$, the Hamiltonian vector fields
$X_{f_1},\dots, X_{f_r}$ define an involutive distribution of rank $r$. Let us denote by  ${\mathcal F} $ its foliation with $r$-dimensional leaves, see \cite{LMV}.
{When $\FF_m$ is a compact $r$-dimensional manifold,  the action-angle coordinate theorem  proved in  \cite{LMV} (Theorem 1.1) proves that $\FF_m$ is a torus and gives a semilocal description of the Poisson structure in a neighborhood of a compact invariant set:}

\begin{theorem}\label{thm:action-angle_intro}
  Let $(M,\Pi,F)$ be a non-commutative integrable system of rank
  $r$, where $F=(f_1,\dots,f_s)$ and suppose that
  $\FF_m$ is compact, where $m\in M_{F,r}\cap U_F$.  Then there exist ${\R} $-valued smooth
  functions $(p_1,\dots, p_r,z_1, \dots, z_{s-r} )$ and $ {\R}/{\Z}$-valued smooth functions
  $({\theta_1},\dots,{\theta_r})$, defined in a neighborhood $U$ of $\FF_m$, and functions
  $\phi_{kl}=-\phi_{lk}$, which are independent of $\theta_1,\dots,\theta_r,p_1,\dots,p_r$, such that
  \begin{enumerate}

  \item $\FF_m$ is a torus $\T^r$.
    \item The functions $(\theta_1,\dots,\theta_r,p_1,\dots,p_{r},z_1 ,\dots, z_{s-r} )$ define a diffeomorphism
            $U\simeq\T^r\times B^{s}$;
    \item The Poisson structure can be written in terms of these coordinates as,
\begin{equation*}
       \Pi=\sum_{i=1}^r \pp{}{\theta_i}\we\pp{}{p_i} + \sum_{k,l=1}^{s-r} \phi_{kl}(z) \pp{}{z_k}\we\pp{}{z_l} ;
    \end{equation*}
    \item The leaves of the surjective submersion $F=(f_1,\dots,f_{s})$ are given by the projection onto the
      second component $\T^r \times B^{s}$ and as a consequence the functions $f_1,\dots,f_s$ depend on
      $p_1,\dots,p_r,z_1, \dots, z_{s-r}$ only.
  \end{enumerate}
\end{theorem}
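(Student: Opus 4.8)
The plan is to reprove only assertion~(1) --- that $\FF_m$ is a torus $\T^r$ --- by means of Corollary~\ref{thm:main}; assertions~(2)--(4), namely the existence of action, angle and transverse coordinates together with the normal form $\Pi=\sum_i\partial_{\theta_i}\we\partial_{p_i}+\sum_{k,l}\phi_{kl}(z)\,\partial_{z_k}\we\partial_{z_l}$, are precisely Theorem~1.1 of \cite{LMV} and will not be reproved here. So the goal is to equip $\FF_m$ with $r$ closed $1$-forms that are linearly independent at every point and then apply Corollary~\ref{thm:main} with ``$n$'' there replaced by $r$.

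First I would record the relevant structure on $\FF_m$. Since $m\in M_{F,r}\cap U_F$, on that open set the Hamiltonian vector fields $X_{f_1},\dots,X_{f_r}$ generate an involutive distribution of rank $r$, and $\FF_m$ is, by definition, the leaf of the associated foliation through $m$; hence $\FF_m$ is connected, $r$-dimensional, and compact by hypothesis. Put $X_i:=X_{f_i}|_{\FF_m}$ for $i=1,\dots,r$. These are vector fields tangent to $\FF_m$ which frame $T_p\FF_m$ at every $p\in\FF_m$ (they span the distribution and are pointwise independent on $M_{F,r}\cap U_F$), and they pairwise commute on $\FF_m$ because $[X_{f_i},X_{f_j}]=X_{\{f_i,f_j\}}$ and $\{f_i,f_j\}=0$ for $1\le i,j\le r$ by the involutivity clause of Definition~\ref{def:noncomintsys}. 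Let $\beta_1,\dots,\beta_r$ be the coframe on $\FF_m$ dual to $X_1,\dots,X_r$, i.e.\ $\beta_i(X_j)=\delta_{ij}$. One could equally mimic the symplectic argument above: $\FF_m$ lies in the symplectic leaf $S$ through $m$, which has dimension $2r$ because the span of the $X_{f_i}$ is isotropic in $S$ while the maximal rank of $\Pi$ is $2r$; one then picks vector fields $S_i$ on a neighbourhood of $\FF_m$ in $S$ with $S_i(f_j)=\delta_{ij}$, sets $\alpha_i=\iota_{S_i}\omega_S$ with $\omega_S$ the symplectic form of $S$, and takes $\beta_i$ to be the pullback of $\alpha_i$ under the inclusion $\FF_m\hookrightarrow S$; this yields the same forms.

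Next I would carry out the verification, which is identical to the two lemmas preceding Liouville's theorem above. For any $1$-form $\beta$ on $\FF_m$ one has $d\beta(X_i,X_j)=X_i(\beta(X_j))-X_j(\beta(X_i))-\beta([X_i,X_j])$; taking $\beta=\beta_k$, the first two terms vanish because $\beta_k(X_j)$ is constant and the third vanishes because $[X_i,X_j]=0$, so $d\beta_k(X_i,X_j)=0$ for all $i,j$, and since $X_1,\dots,X_r$ frame $T\FF_m$ this forces $d\beta_k=0$. Being a coframe, $\beta_1,\dots,\beta_r$ are linearly independent at every point of $\FF_m$. Thus $\FF_m$ is a compact connected $r$-manifold carrying $r$ pointwise independent closed $1$-forms, and Corollary~\ref{thm:main} gives $\FF_m\cong\T^r$, which is assertion~(1); the commutative Poisson case, assertion~(1) of Theorem~\ref{thm:action-anglepoisson}, is the sub-case in which additionally all of $f_1,\dots,f_s$ pairwise commute and is covered verbatim.

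The only point demanding attention --- rather than a real obstacle --- is that $X_{f_1},\dots,X_{f_r}$ must frame $T_p\FF_m$ for \emph{every} $p\in\FF_m$ and not just at the base point $m$; this is automatic, since $\FF_m$ is a leaf of the distribution these fields generate on the open set $M_{F,r}\cap U_F$ where they are independent, so there is no need to shrink to a chart and the $\beta_i$ are globally defined on $\FF_m$. The genuinely substantial content of the theorem --- the semilocal normal form of $\Pi$ in assertions~(2)--(4) --- is out of reach of this Tischler-type argument and is imported from \cite{LMV}; as noted in the introduction, a self-contained ``dual'' derivation of the action--angle coordinates is left for future work.
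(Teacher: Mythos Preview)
Your proposal is correct and follows essentially the same route as the paper: construct the coframe $\beta_1,\dots,\beta_r$ dual to the commuting Hamiltonian vector fields $X_{f_1},\dots,X_{f_r}$ on the leaf $\FF_m$, verify closedness via the Cartan formula using $[X_i,X_j]=X_{\{f_i,f_j\}}=0$, and invoke Corollary~\ref{thm:main}; items~(2)--(4) are likewise deferred to \cite{LMV}. Your alternative description via the symplectic leaf and your remark that the $X_{f_i}$ frame $T\FF_m$ globally (so the $\beta_i$ are defined everywhere on $\FF_m$) are welcome clarifications but do not change the argument.
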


Let us now prove the first part of the theorem above using Corollary \ref{thm:main}.

\begin{proof}{(of first item above)}

Consider $F=(f_1,\dots, f_s)$ the set of first integrals of the non-commutative integrable system. Consider the span of the Hamiltonian vector fields $X_i:=\Pi(df_i, \cdot)$. From definition of the non-commutative integrable system at each point on the regular set, dimension of the vector space is $r$.
     Denote by $\alpha_i$ the $1$-forms such that  $\alpha_i(X_j)=\delta_{ij}$. They are not uniquely determined, but if we consider the inclusion $j$ of the orbit into the manifold, then the $1$-forms $\beta_i=j^*\alpha_i$ are uniquely determined. We can easily check that the forms  $\beta_i$ are closed:  \begin{align*}
d\beta_i (X_j,X_k)&=  X_k(\beta_i(X_j)) - X_j(\beta_i(X_k)) - \beta_i([X_j,X_k]) \\
				   &= X_k( \delta_{ij}) - X_j (\delta_{ik}) - \beta_i(0)=0
				   \end{align*}
where in the last equality we have used that $[X_j,X_k]=X_{\{f_j, f_k\}}$ and from the definition of non-commutative integrable system $X_{\{f_j, f_k\}}=X_0=0$.

From the definition the dimension of the orbit is $r$ and we have exactly $r$ forms thus applying  Corollary \ref{thm:main} we conclude that the orbit is a torus. From the regular value theorem, observe also that this orbit is the connected component through the point of the mapping given by
$F=(f_1,\dots,f_{s})$.
\end{proof}

Finally, when $r=s$ we obtain as corollary the first statement of Theorem 3.7 of the commutative case.

\begin{corollary} Given an integrable system on a Poisson manifold $\F=(f_1,\dots,f_s)$, the regular integral manifold $\FF_m$ of { the distribution generated by} ${X_{f_1},\dots,X_{f_s}}$, passing through $m$, is a torus of dimension $r$, $\T^r$.

\end{corollary}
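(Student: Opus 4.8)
The final statement is a corollary asserting that for a commutative Liouville integrable system on a Poisson manifold, the compact regular integral leaf $\FF_m$ of the distribution $\langle X_{f_1},\dots,X_{f_s}\rangle$ is a torus $\T^r$. The plan is to observe that a commutative integrable system (Definition \ref{def:liouville}) is precisely the case $r=s$ of a non-commutative integrable system (Definition \ref{def:noncomintsys}): the first integrals $f_1,\dots,f_s$ are independent on a dense open set, they are pairwise in involution (so in particular $f_1,\dots,f_r=f_1,\dots,f_s$ are in involution with $f_1,\dots,f_s$), and $r+s=n$ becomes $2r=n$, consistent with the maximal rank being $2r$. Hence the corollary is a direct specialization of the first item of Theorem \ref{thm:action-angle_intro}, whose proof was just given.

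Concretely, I would argue as follows. At a regular point $m$ (so $\diff_m f_1\we\dots\we\diff_m f_s\neq 0$ and the rank of $\Pi$ at $m$ is $2r$) the involution relations $\{f_i,f_j\}=0$ give $[X_{f_i},X_{f_j}]=X_{\{f_i,f_j\}}=0$, so the distribution spanned by $X_{f_1},\dots,X_{f_s}$ is involutive; and since $r+s=n$ with maximal rank $2r$, one checks that these $s=r$ vector fields are independent near $m$, so the leaf $\FF_m$ has dimension $r$. Exactly as in the non-commutative proof, pick $1$-forms $\alpha_i$ on a neighborhood in $M$ with $\alpha_i(X_j)=\delta_{ij}$, pull them back by the inclusion $j\colon\FF_m\hookrightarrow M$ to get $\beta_i=j^*\alpha_i$, and compute
\begin{align*}
d\beta_i(X_j,X_k) &= X_k(\beta_i(X_j)) - X_j(\beta_i(X_k)) - \beta_i([X_j,X_k]) \\
&= X_k(\delta_{ij}) - X_j(\delta_{ik}) - \beta_i(0) = 0,
\end{align*}
using $[X_j,X_k]=X_{\{f_j,f_k\}}=X_0=0$. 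So $\beta_1,\dots,\beta_r$ are $r$ closed $1$-forms on the compact connected $r$-manifold $\FF_m$, and they are linearly independent at every point since $\beta_i(X_j)=\delta_{ij}$ with $X_1,\dots,X_r$ a frame of $T\FF_m$ (equivalently $\beta_1\we\dots\we\beta_r$ is nowhere zero). Applying Corollary \ref{thm:main} yields $\FF_m\cong\T^r$.

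There is essentially no obstacle here beyond bookkeeping: the only mildly delicate point is confirming that under the commutative hypotheses the leaf $\FF_m$ genuinely has dimension $r$ (not merely $\leq r$), which follows because independence of the $df_i$ together with nondegeneracy of $\Pi$ on the symplectic leaf forces the $X_{f_i}$ to be independent at $m$, and that the construction of the $\alpha_i$ and the closedness computation are word-for-word the $r=s$ case of the argument already carried out for Theorem \ref{thm:action-angle_intro}. Thus the proof reduces to citing that specialization, or, if one prefers a self-contained statement, to repeating the short computation above and invoking Corollary \ref{thm:main}.
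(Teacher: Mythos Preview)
Your proposal is correct and follows exactly the paper's approach: the paper's entire justification for the corollary is the sentence ``Finally, when $r=s$ we obtain as corollary the first statement of Theorem 3.7 of the commutative case,'' i.e., recognizing the commutative system as the $r=s$ instance of Definition~\ref{def:noncomintsys} and invoking the proof of item~(1) of Theorem~\ref{thm:action-angle_intro}. You do precisely this, and in addition you spell out the construction of the $\beta_i$ and the closedness computation, which is a verbatim repetition of the argument the paper already gave for the non-commutative case.
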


\end{document}